\begin{document}

\title{Data-driven Voltage Regulation in Radial \\Power Distribution Systems}

\author{Hanchen Xu,~\IEEEmembership{Student Member,~IEEE,}
        Alejandro~D.~Dom\'{i}nguez-Garc\'{i}a,~\IEEEmembership{Member,~IEEE,}
        Venugopal~V.~Veeravalli,~\IEEEmembership{Fellow,~IEEE,}
        and Peter~W.~Sauer,~\IEEEmembership{Life~Fellow,~IEEE}
\thanks{The authors are with the Department of Electrical and Computer Engineering at the University of Illinois at Urbana-Champaign, Urbana, IL 61801, USA. Email: \{hxu45, aledan, vvv, psauer\}@illinois.edu.}
}

\maketitle

\begin{abstract}
In this paper, we develop a data-driven voltage regulation framework for distributed energy resources (DERs) in a balanced radial power distribution system.
The objective is to determine optimal DER power injections that minimize the voltage deviations from a desirable voltage range without knowing a complete power distribution system model a priori.
The nonlinear relationship between the voltage magnitudes and the power injections in the power distribution system is approximated by a linear model, the parameters of which---referred to as the voltage sensitivities---can be computed directly using information on the topology and the line parameters.
Assuming the knowledge of feasible topology configurations and distribution line resistance-to-reactance ratios, the true topology configuration and corresponding line parameters can be estimated effectively using a few sets of measurements on voltage magnitudes and power injections.
Using the estimated voltage sensitivities, the optimal DER power injections can be readily determined by solving a convex optimization problem.
The proposed framework is intrinsically adaptive to changes in system conditions such as unknown topology reconfiguration due to its data-driven nature.
The effectiveness and efficiency of the proposed framework is validated via numerical simulations on the IEEE 123-bus distribution test feeder.
\end{abstract}

\begin{IEEEkeywords}
power distribution system, distributed energy resource, voltage regulation, line parameter estimation, topology estimation, data-driven, sensitivity.
\end{IEEEkeywords}

\section{Introduction}

\IEEEPARstart{V}{oltage} regulation, which is of central importance in power distribution systems, is conventionally accomplished by devices such as voltage regulators, load tap changers, and shunt capacitros \cite{kersting2006distribution}, and more recently, by distributed energy resources (DERs) with fast-responding characteristics \cite{robbins2013two, robbins2016optimal, zhang2018distributed}.
For example, in \cite{robbins2013two}, the authors proposed a two-stage distributed architecture for voltage regulation in power distribution systems, where the required reactive power injections are determined by each local controller in the first stage, and any deficiency is compensated in the second stage by other DERs providing more reactive power so as to uniformly raise the voltage profiles across the network.
In \cite{robbins2016optimal}, the authors formulated the voltage regulation problem as a convex optimization problem leveraging some relaxation technique, by solving which the optimal set-points of DER reactive power injections can be determined.
However, most of existing voltage regulation schemes using DERs assume perfect knowledge of the power distribution system model, and thus may fail in the absence of an accurate model, which is typically the case in practice.
In addition, the power distribution system model may change from time to time due to operations such as topology reconfiguration for loss minimization or load balancing \cite{baran1989network}.
As such, voltage regulation schemes which can adapt to changes in system conditions and are robust against model errors are indeed more desirable in power distribution systems.

In situations where an accurate system model is not available, data-driven methods can be applied as an alternative.
A key idea in these methods is to approximate the relation between the outputs of interest (e.g., voltage magnitudes) and the controls (e.g., power injections) by a linear model, the parameters of which are referred to as the sensitivities, and estimate the sensitivities from the measurements \cite{chen2014measurement, zhang2017power}.
For example, this idea has been pursued in estimation of injection shifting factors and power transfer distribution factors \cite{chen2014measurement}, and loss factors \cite{xu2018frequency}.
The sensitivities have also been utilized in voltage regulation problems \cite{mugnier2016model, zhang2018data, zhang2018identification}. 
For example, the authors proposed ambient signal based estimation methods for voltage-var sensitivities in transimission systems in \cite{zhang2018identification}.
They further developed data-driven sequential voltage control methods based on estimated voltage-var sensitivities and have proven the effectiveness via simulations using realistic data.
The sensitivities estimated from measurement enjoy several nice properties, including adaptivity to changes in system conditions such as topology reconfigurations or parameter changes.
However, existing approaches require a significant amount of measurements in order to obtain accurate estimates of the sensitivities.
This may be feasible in transmission systems equipped with phasor measurement units, yet, it may not be practical for power distribution systems.

In this paper, we develop a data-driven voltage regulation framework for DERs in a balanced radial power distribution system, the objective of which is to determine optimal DER power injections that minimize the voltage deviations from a desirable voltage range without knowing a complete distribution system model a priori.
Specifically, we will take advantage of an approximate linear model---the so-called LinDistFlow model (see, e.g., \cite{baran1989network})---to simplify the nonlinear relationships between voltage magnitudes and power injections. 
The coefficients of the LinDistFlow model are essentially the sensitivities of the squared voltage magnitudes with respect to active and reactive power injections---referred to as the voltage sensitivities, and can be computed directly using information on the topology and the line parameters.
Assuming the knowledge of feasible topology configurations and distribution line resistance-to-reactance (``$r$-to-$x$") ratios, which are typically available and do not change during a relatively short time period, the true topology configuration and corresponding line parameters can be estimated effectively using a few sets of measurements on voltage magnitudes and power injections.
Using the estimated voltage sensitivities, the optimal DER power injections can be readily determined by solving a convex optimization problem.
Theoretical analysis shows that the voltage sensitivities of interest are easily identifiable.

Part of this work has been published in as a conference paper in \cite{xu2018voltage}. This paper extends our earlier work in \cite{xu2018voltage} in several aspects.
First, we have developed an efficient topology estimation algorithm that uses the same set of measurements as the line parameter estimator and incorporated it into the framework.
Second, we also present theoretical analysis on the identifiability of the line parameters.
Third, we have added results from extensive numerical simulations on a larger distribution test feeder.

The remainder of the paper is organized as follows. Section \ref{sec:prelim} presents the preliminaries including the power distribution system model as well as the voltage regulation problem. 
Section \ref{sec:framework} presents the details of the proposed voltage regulation framework, particularly, a voltage sensitivity estimator and a voltage controller.
The identifiability of the voltage sensitivities is analyzed in Section \ref{sec:identify}.
The effectiveness of the proposed framework is validated in Section \ref{sec:sim} through some numerical simulations. 
Section \ref{sec:con} concludes this paper.

\section{Preliminaries} \label{sec:prelim}

In this section, we provide the power distribution system model adopted in this work. 
Subsequently, we describe the voltage regulation problem in power distribution systems.

\subsection{Power Distribution System Model}

Consider a three-phase balanced power distribution system represented by a directed graph  $\calG = (\tdcalN, \calE)$, where $\tdcalN = \{0, 1, \cdots, N\}$ is the set of buses (nodes), and $\calE = \tdcalN \times \tdcalN$ is set of distribution lines (edges).
Let $\calL = \{1, 2, \cdots, L\}$ be the index set of distribution lines. 
Each electrical line $\ell \in \calL$ is associated with $(i, j) \in \calE$, i.e., $i$ is the sending end and $j$ is the receiving end of line $\ell$, with the direction from $i$ to $j$ defined to be positive. 
Let $r_\ell$ and $x_\ell$ denote the resistance and reactance of line $\ell \in \calL$, respectively, and define $\bm{r}=[r_\ell]^\top$ and $\bm{x}=[x_\ell]^\top$.
Let $z_\ell$ denote the ``$r$-to-$x$" ratio of line $\ell$, i.e., $r_\ell/x_\ell = z_\ell$, and define $\bm{z} = [z_\ell]^\top$, $\ell \in \calL$.
Throughout the rest of the paper, we make the following assumptions:
\begin{enumerate}
	\item[\textbf{A1}.] Bus~$0$ corresponds to the substation bus and $V_0$ is a constant;
	\item[\textbf{A2}.] The power distribution system is radial and connected;
	\item[\textbf{A3}.] The power distribution system is lossless; and
	\item[\textbf{A4}.] The ``$r$-to-$x$" ratios are known.
\end{enumerate}

Let $V_i$ denote the magnitude of the voltage at bus  $i \in \tdcalN$, and define $\bm{V} = [V_i]^\top$, $i \in \calN = \{1, \cdots, N\}$.
Let $\calN^g = \{1, \cdots, n\}$ denote the index set of DERs.
In addition, let $p_i^g$ and $q_i^g$ respectively denote the active and reactive power injected by DER~$i$, and define $\bm{p}^g = [p_i^g]^\top$ and $\bm{q}^g = [q_i^g]^\top$, $i \in \calN^g$.
Similarly, let $p_i^d$ and $q_i^d$ respectively denote the active and reactive power demanded by load~$i$, and define $\bm{p}^d = [p_i^d]^\top$, and $\bm{q}^d = [q_i^d]^\top$, $i \in \calN$. 
Let $\underline{p}_i^g$ and $\overline{p}_i^g$ respectively denote the minimum and  maximum active  power that can be provided by DER~$i$, and define $\bm{\underline{p}}^g = [\underline{p}_i^g]^\top$ and $\bm{\overline{p}}^g = [\overline{p}_i^g]^\top$, $i \in \calN^g$. 
Similarly, let $\underline{q}_i^g$ and $\overline{q}_i^g$ respectively denote the minimum and maximum reactive power that can be provided by DER~$i$, and define $\bm{\underline{q}}^g = [\underline{q}_i^g]^\top$ and $\bm{\overline{q}}^g = [\overline{q}_i^g]^\top$, $i \in \calN^g$.
Let $\bm{C} \in \bbR^{N \times n}$ denote the mapping matrix between the DER indices and the buses, of which the entry at the $i^{\text{th}}$ row, $j^{\text{th}}$ column of $\bm{C}$ is $1$ if DER $j$ is at bus $i$. 
Define  $\bm{p}=[p_i]^\top = \bm{C} \bm{p}^g-\bm{p}^d$, and $\bm{q} = [q_i]^\top = \bm{C} \bm{q}^g-\bm{q}^d$. 

Let $\tdbdM=[M_{i \ell}] \in \bbR^{(N+1) \times L}$ denote the node-to-edge incidence matrix of $\calG$, with $M_{i \ell} = 1$ and $M_{j \ell} = -1$ if line $\ell$ starts from bus $i$ and ends at bus $j$, and all other entries equal to zero. 
Let $\bm{M}$ denote the $(N\times L)$-dimensional matrix that results from removing the first row in $\tdbdM$.
Under Assumption \textbf{A2}, $L=N$, and $\bm{M}$ is invertible.
Note that the topology of the power distribution system is uniquely determined by $\bm{M}$; therefore, we also refer to $\bm{M}$ as a topology configuration.
A power distribution system may be operated under various feasible topology configurations.
Let $\calM$ denote the set of feasible topology configurations of the power distribution system.
Note that each topology configuration is associated with a vector of ``$r$-to-$x$" ratios.
Let $\calZ$ denote the set of ``$r$-to-$x$" ratio vectors that correspond to $\calM$.

Let $v_i = V_i^2$, and define $\bm{v} = [v_i]^\top$, $i \in \calN$, $\tdbdv = \bm{v} - v_0 \ones_N$.
Under Assumptions \textbf{A2} and \textbf{A3}, the relation between $\bm{v}$, $\bm{p}$, and $\bm{q}$, can be captured by the so-called LinDisfFlow model as follows \cite{baran1989network}:
\begin{equation} \label{eq:LinDistFlow}
    \tdbdv = \bm{R} \bm{p} + \bm{X} \bm{q}, 
\end{equation}
where $\ones_N$ is the $N$-dimensional all-ones vector, and
\begin{align}
	\bm{R} &= 2 (\bm{M}^{-1})^\top \diag{\bm{r}} \bm{M}^{-1}, \\
	\bm{X} &= 2 (\bm{M}^{-1})^\top \diag{\bm{x}} \bm{M}^{-1},
\end{align}
where $\diag{\cdot}$ returns a diagonal matrix with the entries of the argument on its diagonals; we refer to the matrices $\bm{R}$ and $\bm{X}$ as the voltage sensitivity matrices, or simply voltage sensitivities.

\subsection{Voltage Regulation Problem} \label{sec:vcp}

The objective here is to maintain the voltage magnitude at each bus $i$, $i \in \calN$, of the power distribution system within a pre-specified interval denoted by $[\underline{V}_i, \overline{V}_i]$.
While a number of means, such as load tap changers and capacitor banks can be utilized to achieve  the aforementioned objective, it is also possible to utilize the DERs present in the power distribution system. 
In this paper, we focus solely in this later mechanism for achieving voltage regulation. 
Then, the problem is to determine the DER active and reactive power injections so that
\begin{itemize} 
\item[\textbf{[C1.]}] the active and reactive power injections from each DER $i$, $i \in \calN^g$, do not exceed its corresponding capacity limits, i.e., $\bm{\underline{p}}^g \leq \bm{p}^g \leq \bm{\overline{p}}^g$, $\bm{\underline{q}}^g \leq \bm{q}^g \leq \bm{\overline{q}}^g$; and
\item[\textbf{[C2.]}] the voltage magnitude at each bus $i$, $i \in \calN$, is within the pre-specified interval, i.e., $\underline{V}_i \leq V_i \leq \overline{V}_i$.
\end{itemize}
In addition, among all feasible values of $\bm{p}^g$ and $\bm{q}^g$, we would like to select the ones that minimize some cost function, which reflects the cost of voltage deviations as well as the cost of active and reactive power provision.

In this paper, we assume no priori information on the voltage sensitivity matrices except $\calM$ and $\calZ$.
The voltage regulation problem cannot be solved without knowing the voltage sensitivity matrices.
Therefore, we will resort to the data-driven approach to estimate voltage sensitivity matrices from measurements of voltage magnitudes and power injections.

\section{Voltage Regulation Framework} \label{sec:framework}

In this section, we propose an adaptive data-driven framework for voltage regulation using DERs. 
We first give an overview on the framework and then present the details of the fundamental building blocks of the framework.

\subsection{Framework Overview}

The proposed voltage regulation framework consists of two components, a voltage sensitivity estimator and a voltage controller.
The interaction between the different components is illustrated via the block diagram in Fig. \ref{fig:framework}.
The estimator component contains a topology estimator that estimates the topology of the power distribution system (essentially, $\bm{M}$), and a parameter estimator that estimates the line parameters ($\bm{r}$ and $\bm{x}$), using measurements of power injections and voltage magnitudes.
The estimated voltage sensitivity matrices, $\bm{R}$ and $\bm{X}$, are computed using $\bm{M}$, $\bm{r}$, and $\bm{x}$.
After that, the estimated $\bm{R}$ and $\bm{X}$, denoted respectively by $\hat{\bm{R}}$ and $\hat{\bm{X}}$, are sent to the voltage controller.
The voltage controller then computes the set-points for the DER active and reactive power injections that minimize some cost function subject to constraints \textbf{C1} and \textbf{C2}.
The DERs will be instructed to inject active and reactive power by the amount determined by the voltage controller.
A new set of measurements will be available once the DERs have modified their power injections.
These measurements will be used by the estimator to update $\hat{\bm{R}}$ and $\hat{\bm{X}}$ so as to reflect any changes in them.
The detailed formulations for the voltage sensitivity problem and the voltage regulation problem are presented next.

\begin{figure}[!t]
\centering
\includegraphics[width=3in]{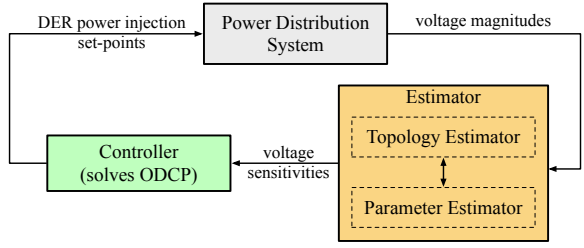}
\caption{Data-driven voltage regulation framework.}
\label{fig:framework}
\end{figure}

\subsection{Voltage Sensitivity Estimator} \label{sec:vse}

Assume at time instant $k+1$, we have measurements $V_0[k']$, $\bm{V}[k']$, $\bm{p}[k']$, $\bm{q}[k']$, $k'= 0, 1, \cdots, k$, where the index $k'$ indicates the corresponding measurement is obtained at time instant $k'$.
To reduce the computational burden, we select a subset of measurements, denoted by $\calK = \{k-m, \cdots, k\}$.
The voltage sensitivities can be estimated based on the LinDistFlow model in \eqref{eq:LinDistFlow}.
The objective of the voltage sensitivity estimator at time instant $k$ is to estimate from the measurements in $\calK$ the values of $\bm{R}$ and $\bm{X}$, which can be computed using $\bm{M}$, $\bm{r}$, and $\bm{x}$.

We propose a voltage sensitivity estimator that consists of two components, a parameter estimator and a topology estimator.
The former aims to estimate the line parameters, given the topology information, i.e., $\bm{M}$, while the later aims to determine $\bm{M}$ from $\calM$, based on the results from the parameter estimator, the details of which are presented next.

\subsubsection{Parameter estimator}

Given the topology information $\bm{M}$, to estimate $\bm{R}$ and $\bm{X}$ is essentially to estimate $\bm{r}$ and $\bm{x}$.
Let $\hat{\bm{r}}$ and $\hat{\bm{x}}$ denote the estimate of $\bm{r}$ and $\bm{x}$, respectively.
We can then formulate the parameter estimation problem by using the relation in \eqref{eq:LinDistFlow} as
\begin{align*}
	\hat{\bm{r}}, \hat{\bm{x}} = \argmin_{\bm{r}, \bm{x}} \sum_{k' \in \calK} \gamma^{k-k'}\norm{\bm{R} \bm{p}[k'] + \bm{X} \bm{q}[k'] - \tdbdv[k']}^2,
\end{align*}
subject to 
\begin{subequations} \label{eq:para_est}
	\begin{align}
		\bm{R} &= 2 (\bm{M}^{-1})^\top \diag{\bm{r}} \bm{M}^{-1}, \\
		\bm{X} &= 2 (\bm{M}^{-1})^\top \diag{\bm{x}} \bm{M}^{-1},
	\end{align}
\end{subequations}
where $\norm{\cdot}$ denotes the $L_2$-norm, $\gamma \in (0, 1]$ is a discount factor.
Essentially, the objective of the parameter estimator is to find the line parameters that can fit the LinDistFlow model best, for the given topology configuration.

We next show that \eqref{eq:para_est} has a closed-form solution.
First note that the matrix $\diag{\bm{x}}$ can be decomposed as follows:
\begin{align} \label{eq:basis_expan}
	\diag{\bm{x}} = \sum_{\ell=1}^L x_\ell \bm{e}_\ell \bm{e}_\ell^\top,
\end{align}
where $\bm{e}_\ell$ is the $\ell^{\mathrm{th}}$ basis vector in $\bbR^L$, i.e., all entries in $\bm{e}_\ell$ are $0$ except the $\ell^{\mathrm{th}}$ entry, which equals to $1$.
Using \eqref{eq:basis_expan}, we obtain that
\begin{align} \label{eq:Xq}
	\bm{X} \bm{q}[k'] & = 2 (\bm{M}^{-1})^\top \diag{\bm{x}} \bm{M}^{-1} \bm{q}[k'] \nonumber \\
	& = 2 (\bm{M}^{-1})^\top \sum \limits_{\ell=1}^L x_\ell \bm{e}_\ell \bm{e}_\ell^\top \bm{M}^{-1} \bm{q}[k'] \nonumber \\
	& = \sum \limits_{\ell=1}^L \bm{\Xi}_\ell \bm{q}[k'] x_\ell,
\end{align}
where $\bm{\Xi}_\ell = 2 (\bm{M}^{-1})^\top \bm{e}_\ell \bm{e}_\ell^\top \bm{M}^{-1}$.
Similarly,
\begin{align} \label{eq:Rp}
	\bm{R} \bm{p}[k'] = \sum \limits_{\ell=1}^L \bm{\Xi}_\ell \bm{p}[k'] r_\ell = \sum \limits_{\ell=1}^L \bm{\Xi}_\ell z_\ell \bm{p}[k'] x_\ell.
\end{align}

Let $\bm{\rho}_\ell[k'] = \gamma^{\frac{k-k'}{2}}(z_\ell \bm{p}[k'] + \bm{q}[k'])$, $\ell \in \calL$, and define
\begin{align} \label{eq:psi_mat}
	\bm{\Psi}[k] = 
	\begin{bmatrix}
		\bm{\Xi}_1 \bm{\rho}_1[k-m] & \cdots & \bm{\Xi}_L \bm{\rho}_L[k-m] \\
		\vdots & \vdots & \vdots \\
		\bm{\Xi}_1 \bm{\rho}_1[k] & \cdots & \bm{\Xi}_L \bm{\rho}_L[k]
	\end{bmatrix},
\end{align}
and
\begin{align} \label{eq:psi_vec}
	\bm{\psi}[k] = [\gamma^{\frac{m}{2}} \tdbdv[k-m]^\top, \cdots, \gamma^{\frac{0}{2}} \tdbdv[k]^\top]^\top.
\end{align}
Note that $\bm{\Psi}[k] \in \bbR^{(m+1)N \times L}$ and $\bm{\psi}[k] \in \bbR^{(m+1)N}$ are dependent on $\calK$.
Then \eqref{eq:para_est} can be equivalently formulated in the classical form of a linear regression problem as follows:
\begin{align} \label{eq:para_est2}
	\minimize_{\bm{x}} \norm{\bm{\Psi}[k] \bm{x} - \bm{\psi}[k]}^2,
\end{align}
the solution to which is given by
\begin{align} \label{eq:vse_sol_x}
	\hat{\bm{x}} =  \bm{\Psi}[k]^\dagger \bm{\psi}[k]
\end{align}
where $\bm{\Psi}[k]^\dagger$ denotes the pseudo-inverse of $\bm{\Psi}[k]$, obtained via singular value decomposition.
Note that $\bm{\Psi}[k]$ needs to have full rank, i.e., $\rank{\bm{\Psi}[k]} = L$, in order to estimate $\bm{x}$.
The resistance vector can be estimated using
\begin{align} \label{eq:vse_sol_r}
	\hat{\bm{r}} = \diag{\bm{z}} \hat{\bm{x}}.
\end{align}

Define a residual vector, denoted by $\bm{\varepsilon}$, as follows:
\begin{align} \label{eq:vareps}
	\bm{\varepsilon} = \hat{\bm{R}} \bm{p} + \hat{\bm{X}} \bm{q} - \tdbdv,
\end{align}
where 
\begin{align}
	\hat{\bm{R}} &= 2 (\bm{M}^{-1})^\top \diag{\hat{\bm{r}}} \bm{M}^{-1}, \label{eq:R_hat} \\
	\hat{\bm{X}} &= 2 (\bm{M}^{-1})^\top \diag{\hat{\bm{x}}} \bm{M}^{-1}. \label{eq:X_hat}
\end{align}
Given a set of measurements, we can compute a residual vector for each $\bm{M} \in \calM$ deterministically.

\subsubsection{Topology estimator}

The objective of the topology estimator is to find $\bm{M} \in \calM$ such that a weighted sum of $\norm{\bm{\varepsilon}}$ over several time instants is minimized.
At time instant $k+1$, the topology estimation problem can be formulated as:
\begin{align} \label{eq:topology_est}
	\hat{\bm{M}} = \argmin_{\bm{M} \in \calM} \epsilon_{\bm{M}},
\end{align}
with
\begin{align} \label{eq:residual_error}
	\epsilon_{\bm{M}} = \sum_{k' \in \calK} \gamma^{k-k'}\norm{\bm{\varepsilon}[k']},
\end{align}
where $\bm{\varepsilon}$ is computed through \eqref{eq:vareps} to \eqref{eq:X_hat}.
We refer to $\epsilon_{\bm{M}}$ as the \textit{residual error} associated with topology configuration $\bm{M}$.

Essentially, the topology estimator selects the topology under which the residual error is minimized, where the line parameters are estimated by the parameter estimator.
The intuition here is that different topology configurations will impose different structural constraints on voltage sensitivity matrices, which consequently will impact the residual error.
The true topology configuration is expected to result in the least residual error.
The voltage sensitivity estimation algorithm is summarized in Algorithm \ref{algo:vse}.

\begin{algorithm}[!t]
    \SetAlgoLined
    \DontPrintSemicolon
    \KwData{\\~~~~~~$\calM$: set of feasible topology configurations \\~~~~~~$\calZ$: set of ``$r$-to-$x$" ratio vectors \\~~~~~~$\bm{p}[k'], \bm{q}[k'], \bm{v}[k']$: active power, reactive power, voltage magnitude measurements, $k' \in \calK$}
    \KwResult{\\~~~~~~$\hat{\bm{M}}$: estimated topology configuration \\~~~~~~$\hat{\bm{r}}, \hat{\bm{x}}$: estimated line parameters}
    \For{$\bm{M} \in \calM$, $\bm{z} \in \calZ$}{
    	Construct $\bm{\Psi}$ and $\bm{\psi}$ according to \eqref{eq:psi_mat} and \eqref{eq:psi_vec}\;
    	Compute the pseudo-inverse of $\bm{\Psi}$, i.e., $\bm{\Psi}^\dagger$\;
    	Compute line parameters using \eqref{eq:vse_sol_x} and \eqref{eq:vse_sol_r}\;
    	Compute voltage sensitivities using \eqref{eq:R_hat} and \eqref{eq:X_hat}\;
    	Compute the residual error via \eqref{eq:residual_error}\;
    }
    Select topology configuration $\hat{\bm{M}}$ using \eqref{eq:topology_est}
    and line parameters $\hat{\bm{r}}, \hat{\bm{x}}$ to be the ones associated with $\hat{\bm{M}}$
\label{algo:vse}
\caption{Voltage Sensitivity Estimation}
\end{algorithm}

\subsection{Voltage Controller}

The voltage controller aims to determine the optimal set-points for the DER active and reactive power injections while meeting all requirements discussed in Section \ref{sec:vcp}.
Note that for a given set of power injections, the resulting voltage magnitude at each bus can be estimated using \eqref{eq:LinDistFlow}, where $\hat{\bm{R}}$ and $\hat{\bm{X}}$ are used instead of $\bm{R}$ and $\bm{X}$.
Define $\underline{\bm{v}} = [\underline{V}_i^2]^\top$ and $\overline{\bm{v}} = [\overline{V}_i^2]^\top$, $i \in \calN$.
Then, the voltage control problem can be formulated as the following convex program:
\begin{equation*} 
\minimize_{\bm{p}^g, \bm{q}^g} ~ c(\bm{p}^g, \bm{q}^g)
\end{equation*}
subject to 
\begin{subequations} \label{eq:vcp}
	\begin{align} \label{eq:vcp_c1}
		\bm{v} = \hat{\bm{R}} (\bm{C} \bm{p}^g - \bm{p}^d) + \hat{\bm{X}} (\bm{C} \bm{q}^g - \bm{q}^d) + v_0 \ones_N,
	\end{align}
	\begin{align}
		\bm{\underline{p}}^g \leq \bm{p}^g \leq \bm{\overline{p}}^g,
		\bm{\underline{q}}^g \leq \bm{q}^g \leq \bm{\overline{q}}^g,
	\end{align}
\end{subequations}
with
\begin{align*}
	c(\bm{q}^g) = & (\bm{p}^g)^\top \bm{W}^p \bm{p}^g + (\bm{q}^g)^\top \bm{W}^q \bm{q}^g \nonumber \\
	& + \beta_1 \norm{[\underline{\bm{v}} - \bm{v}]_+}^2 + \beta_2 \norm{[\bm{v} - \overline{\bm{v}}]_+}^2,
\end{align*}
where $\bm{W}^p = \diag{w_1^p, \cdots, w_n^p}$, $\bm{W}^q = \diag{w_1^q, \cdots, w_n^q}$ are non-negative diagonal matrices, $[~]_+$ returns a non-negative vector, $\beta_1$ and $\beta_2$ are non-negative weights.
The first two terms of $c(\cdot)$ are the cost of active and power injections, while the last two terms penalize the violation of constraint \textbf{C2}.

Constraint \eqref{eq:vcp_c1} is the LinDistFlow model, which is used to predict the voltage magnitudes for given power injections.
Note that $\bm{p}^d$ and $\bm{q}^d$ are measured before solving the voltage control problem.
Solving \eqref{eq:vcp} gives the optimal set-points for the DER active and reactive power injections.

\section{Voltage Sensitivity Identifiability} \label{sec:identify}

In this section, we first introduce the path matrix associated with a graph and then analyze the conditions under which the line parameters and correspondingly voltage sensitivities, can be estimated.

\subsection{Path Matrix}

Let $\calP_i \subseteq \calL$ denote the set of lines that form the path from bus $0$---referred to as the root---to bus $i$.
Since the power distribution system is radial, then $\calP_i$ is unique (see Theorem 2.1.4 in \cite{west2001introduction}).
Bus $i$ is a leaf if there is no line that starts from it.
We say bus $i$ is closer to the root than bus $j$ if $|\calP_i| < |\calP_j|$, where $|\cdot|$ denotes the cardinality of a set.
Let $\bm{P} = [P_{\ell i}] \in \bbR^{L \times N}$ denote the path matrix of $\calG$, with $P_{\ell i} = 1$ ($P_{\ell i} = -1$) if line $\ell$ is on $\calP_i$ and their directions agree (disagree), and all other entries equal to zero.
We choose the sending end of line to be the bus that is closer to the root, then all entries in $\bm{P}$ are in $\{0, 1\}$ by definition since the direction of $\calP_i$ and any line on it will always agree.
Under this setup, the relation between $\bm{P}$ and $\bm{M}$ is given by the following lemma.
\begin{lemma}
	$\bm{P}$ and $\bm{M}$ are related by $\bm{M}^{-1} = -\bm{P}$. (see also Theorem 2.10 in \cite{bapat2010graphs}.)
\end{lemma}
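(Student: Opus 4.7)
The plan is to verify directly that $\bm{M}\bm{P} = -\bm{I}_N$ by examining entries, which is equivalent to $\bm{M}^{-1} = -\bm{P}$ since $\bm{M}$ is square (under Assumption \textbf{A2}, $L=N$) and invertible. First, I would fix indices $i,j \in \calN$ and expand
\begin{align*}
    (\bm{M}\bm{P})_{ij} = \sum_{\ell \in \calL} M_{i\ell} P_{\ell j} = \sum_{\ell \in \calP_j} M_{i\ell},
\end{align*}
using the convention (already established in the paragraph preceding the lemma) that the sending end of each line is the endpoint closer to the root, so every entry of $\bm{P}$ along $\calP_j$ equals $+1$ rather than $-1$.

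The key structural observation is that $\calP_j$, with the chosen edge orientation, forms a directed walk $0 = v_0 \to v_1 \to \cdots \to v_k = j$, where each line $\ell_m \in \calP_j$ has sending end $v_{m-1}$ and receiving end $v_m$. I would then carry out a case analysis on where bus $i$ sits relative to this walk. If $i$ is not on $\calP_j$, then $M_{i\ell} = 0$ for every $\ell \in \calP_j$, giving $(\bm{M}\bm{P})_{ij}=0$. If $i = v_m$ is an internal vertex of the walk ($1 \le m < k$), then exactly two lines of $\calP_j$ are incident to $i$: line $\ell_m$ contributes $M_{i\ell_m} = -1$ (it ends at $i$) and line $\ell_{m+1}$ contributes $M_{i\ell_{m+1}} = +1$ (it starts at $i$), so the contributions telescope to $0$. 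Finally, if $i = v_k = j$, only $\ell_k$ is incident to $i$ among the lines in $\calP_j$, contributing $-1$, so $(\bm{M}\bm{P})_{ii} = -1$.

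Combining the three cases yields $\bm{M}\bm{P} = -\bm{I}_N$, and hence $\bm{M}^{-1} = -\bm{P}$. I do not expect a real obstacle here; the only subtlety is making sure the orientation convention (sending end closer to the root) is used consistently so that the signs in $\bm{P}$ and the telescoping in $\bm{M}$ align as described, and that the root row of the incidence matrix, which has been removed to form $\bm{M}$, is precisely what would have absorbed the $+1$ contribution from the first edge $\ell_1$ (which starts at bus $0$). Uniqueness of $\calP_j$, guaranteed by the radial assumption \textbf{A2}, ensures that $\bm{P}$ is well-defined in the first place.
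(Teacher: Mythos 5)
Your proposal is correct and takes essentially the same approach as the paper: a direct entrywise verification that $\bm{M}\bm{P} = -\bm{I}_N$, using uniqueness of the root-to-$j$ path and pairwise cancellation of the $+1/-1$ incidence contributions along it. The only cosmetic difference is that you organize the case analysis by the position of $i$ along the directed walk $\calP_j$ (off the path, internal vertex, endpoint), whereas the paper fixes $i$ and classifies the lines incident to it; the telescoping you describe is the same cancellation the paper exhibits line by line.
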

\begin{proof}
	Consider the entry at the $i^{\text{th}}$ row and $j^{\text{th}}$ column in $\bm{M} \bm{P}$, which is $\sum_{\ell=1}^L M_{i \ell} P_{\ell j}$.
	\begin{enumerate}
		\item Consider first the case where $i=j$. If line $\ell$ is not connected to bus $i$, then $M_{i\ell} = 0$. 
			If line $\ell$ starts from bus $i$, then $M_{i\ell} = 1$ and $P_{\ell i} = 0$. If line $\ell$ ends at bus $i$, then $M_{i\ell} = -1$ and $P_{\ell i} = 1$. 
			Obviously, there is one line that ends at bus $i$. 
			Moreover, such a line is unique since otherwise there will be two paths from the root to bus $i$.
			Therefore, $\sum_{\ell=1}^L M_{i \ell} P_{\ell i} = M_{i \ell_i} P_{\ell_i i} = -1$, where line $\ell_i$ is the line that ends at $i$.
		\item Next consider the case where $i \neq j$. Similar to the previous case, we only need to consider the lines that starts from or ends at bus $i$.
			\begin{enumerate}
				\item If line $\ell$ ends at bus $i$, then $M_{i\ell} = -1$. If $\ell \notin \calP_j$, then $P_{\ell j} = 0$ and $M_{i \ell} P_{\ell j} = 0$. 
					If $\ell \in \calP_j$, then $P_{\ell j} = 1$. In the latter case, there must exist a unique line $\ell' \in \calP_j$ that starts from $i$. Then $M_{i \ell} P_{\ell j} + M_{i \ell'} P_{\ell' j} = -1 + 1 = 0$. Therefore, $\sum_{\ell=1}^L M_{i \ell} P_{\ell j} = 0$.
				\item If line $\ell$ starts from bus $i$, then $M_{i\ell} = 1$. If $\ell \notin \calP_j$, then $P_{\ell j} = 0$ and $M_{i \ell} P_{\ell j} = 0$. If $\ell \in \calP_j$, then $P_{\ell j} = 1$. In the latter case, there must exist a unique line $\ell' \in \calP_j$ that ends at $i$. Similar to the previous argument, $\sum_{\ell=1}^L M_{i \ell} P_{\ell j} = 0$.
			\end{enumerate}
	\end{enumerate}
	In sum, $\sum_{\ell=1}^L M_{i \ell} P_{\ell j}$ equals to $1$ if $i=j$ and $0$ otherwise; therefore, $\bm{M}^{-1} = -\bm{P}$.
\end{proof}
The path matrix will play an important role in the analysis of the identifiability of the voltage sensitivities, which is to be detailed in the next section.

\subsection{Identifiability Analysis}

Before presenting the main results for the identifiability of voltage sensitivities, we introduce the concept of downstream buses.
\begin{definition}
	If line $\ell \in \calP_i$, $\ell \in \calL$, i.e., line $\ell$ is on the path from the root to bus $i$, then bus $i$ is a downstream bus of line $\ell$. The set of downstream buses of line $\ell$ is denoted by $\calN_\ell$.
\end{definition}

As discussed in Section \ref{sec:vse}, $\bm{\Psi}[k]$ needs to have full rank, i.e., $\rank{\bm{\Psi}[k]} = L$, in order to estimate $\bm{x}$ according to \eqref{eq:vse_sol_x}.
When $\bm{\Psi}$ does not have full rank, some of the line parameters cannot be estimated from the measurements.
The main results for the  voltage sensitivity identifiability is stated as follows:
\begin{theorem} \label{thm1}
	The parameter of line $\ell$, $\ell \in \calL$, is identifiable if and only if the following condition is satisfied for some $k' \in \calK$:
	\begin{align} \label{eq:cond}
		\sum_{i \in \calN_\ell} z_{\ell} p_i[k'] + q_i[k'] \neq 0.
	\end{align}
\end{theorem}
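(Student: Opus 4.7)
The plan is to translate identifiability of $x_\ell$ (equivalently of $r_\ell$ via Assumption \textbf{A4}) into a linear-algebraic statement about the $\ell$-th column of $\bm{\Psi}[k]$, and then exploit the rank-one structure of $\bm{\Xi}_\ell$ to reduce this to a scalar condition on $\bm{e}_\ell^\top \bm{P} \bm{\rho}_\ell[k']$. Since \eqref{eq:LinDistFlow} implies $\bm{\psi}[k] = \bm{\Psi}[k] \bm{x}^\star$ for the true parameter vector $\bm{x}^\star$, the minimizer set of \eqref{eq:para_est2} is the affine subspace $\bm{x}^\star + \ker \bm{\Psi}[k]$. Therefore $x_\ell$ is identifiable if and only if $a_\ell = 0$ for every $\bm{a} \in \ker \bm{\Psi}[k]$, i.e., the $\ell$-th column of $\bm{\Psi}[k]$ does not lie in the linear span of the remaining columns.

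Next, using the lemma $\bm{M}^{-1} = -\bm{P}$, I would rewrite $\bm{\Xi}_\ell = 2\,\bm{P}^\top \bm{e}_\ell \bm{e}_\ell^\top \bm{P}$, so that each $N$-row block of the $\ell$-th column of $\bm{\Psi}[k]$ becomes
\[
\bm{\Xi}_\ell \bm{\rho}_\ell[k'] = 2\, c_\ell[k']\, \bm{P}^\top \bm{e}_\ell, \quad c_\ell[k'] := \bm{e}_\ell^\top \bm{P} \bm{\rho}_\ell[k'].
\]
Because the $\ell$-th row of $\bm{P}$ has a $1$ in position $i$ exactly when $i \in \calN_\ell$ and zeros elsewhere, a direct computation yields
\[
c_\ell[k'] = \gamma^{(k-k')/2} \sum_{i \in \calN_\ell} \bigl( z_\ell\, p_i[k'] + q_i[k'] \bigr),
\]
which vanishes if and only if the quantity inside \eqref{eq:cond} vanishes, since $\gamma > 0$.

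Combining these ingredients, any $\bm{a} \in \ker \bm{\Psi}[k]$ must satisfy, block by block,
\[
\bm{P}^\top \sum_{\ell=1}^L a_\ell\, c_\ell[k']\, \bm{e}_\ell = \bm{0}, \quad k' \in \calK.
\]
Since $\bm{P}$ is invertible (Assumption \textbf{A2} makes $\bm{M}$, hence $\bm{P}$, invertible) and $\{\bm{e}_\ell\}$ is a basis of $\mathbb{R}^L$, this forces $a_\ell\, c_\ell[k'] = 0$ for every $\ell$ and every $k' \in \calK$. For the forward direction, if \eqref{eq:cond} holds for some $k' \in \calK$ then $c_\ell[k'] \neq 0$ forces $a_\ell = 0$, and $x_\ell$ is identifiable. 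For the converse, if \eqref{eq:cond} fails for every $k' \in \calK$, then $\bm{e}_\ell$ itself lies in $\ker \bm{\Psi}[k]$ with $\ell$-th entry equal to $1$, so $x_\ell$ can be perturbed arbitrarily without altering the residual and is therefore not identifiable.

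The main obstacle I anticipate is bookkeeping rather than any deep difficulty: carefully separating the rank-one structure of $\bm{\Xi}_\ell$ (which makes each column of $\bm{\Psi}[k]$ a stack of scaled copies of the single vector $\bm{P}^\top \bm{e}_\ell$) from the time-block structure across $k' \in \calK$, and invoking invertibility of $\bm{P}$ only on the inner sum at the appropriate moment. Once that factorization is in place, identifying the $\ell$-th row of $\bm{P}$ with the indicator of $\calN_\ell$ and reading off $c_\ell[k']$ follows immediately from the definitions of $\bm{\rho}_\ell[k']$ and $\calN_\ell$.
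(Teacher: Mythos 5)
Your proposal is correct and follows essentially the same route as the paper: both rewrite $\bm{\Xi}_\ell = 2\bm{P}^\top\bm{e}_\ell\bm{e}_\ell^\top\bm{P}$ via the path-matrix lemma, reduce each block of the $\ell$-th column of $\bm{\Psi}[k]$ to the scalar $\bm{e}_\ell^\top\bm{P}\bm{\rho}_\ell[k']$, identify that scalar with the downstream-sum in \eqref{eq:cond}, and use the linear independence of the columns of $\bm{P}^\top$ (equivalently, invertibility of $\bm{P}$) to kill the kernel coefficients. Your organization is in fact slightly cleaner — characterizing identifiability directly through $\ker\bm{\Psi}[k]$ avoids the paper's two-case split and its WLOG reduction followed by a rank-deficiency contradiction — but the decomposition and the key lemma are the same.
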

\begin{proof}
	Using the path matrix, $\bm{\Xi}_\ell$ can be written as $\bm{\Xi}_\ell = 2 \bm{P}^\top \bm{e}_\ell \bm{e}_\ell^\top \bm{P}$.
	Note that $\bm{P}^\top \bm{e}_\ell$ is the $\ell^{\mathrm{th}}$ column of $\bm{P}^\top$ and $\bm{\Xi}_\ell$ is a rank-one matrix.
	Let $\bm{\pi}_\ell = \bm{P}^\top \bm{e}_\ell$, then $\bm{P}^\top = [\bm{\pi}_1, \cdots, \bm{\pi}_L]$.
	Then,
	\begin{align}
		\bm{\Xi}_\ell = 2 \bm{P}^\top \bm{e}_\ell \bm{e}_\ell^\top \bm{P} = 2 \bm{\pi}_\ell \bm{\pi}_\ell^\top,
	\end{align}
	and $\bm{\Psi}[k]$ can be written as
	\begin{align} \label{eq:psi_mat2}
		\bm{\Psi}[k] 
		&= 2
		\begin{bmatrix}
			\bm{\pi}_1 \bm{\pi}_1^\top \bm{\rho}_1[k-m] & \cdots & \bm{\pi}_L \bm{\pi}_L^\top \bm{\rho}_L[k-m] \\
			\vdots & \vdots & \vdots \\
			\bm{\pi}_1 \bm{\pi}_1^\top \bm{\rho}_1[k] & \cdots & \bm{\pi}_L \bm{\pi}_L^\top \bm{\rho}_L[k]
		\end{bmatrix}.
	\end{align}
	
	Let $\calL = \calL_1 \cup \calL_0$, where $\calL_1$ and $\calL_0$ are the sets of lines that meet and do not meet the conditions in \eqref{eq:cond}, respectively.
	If line $\ell \in \calL_0$, then $\forall k' \in \calK$,
	\begin{align} \label{eq:cond1}
		\sum_{i \in \calN_\ell} z_{\ell} p_i[k'] + q_i[k'] = 0.
	\end{align}
	
	Note that the $i^{\mathrm{th}}$ entry in $\bm{\pi}_\ell$ is $1$ if and only if bus $i$ is a downstream bus of line $\ell$.
	Essentially, the non-zero entries in $\bm{\pi}_\ell$, which are ones, indicate the downstream buses of line $\ell$.
	Therefore, it follows from \eqref{eq:cond1} that, $\forall k' \in \calK$:
	\begin{align} \label{eq:cond2}
		\bm{\pi}_\ell^\top \bm{\rho}_\ell[k'] = 0.
	\end{align}
	Consequently, all entries in the $\ell^{\mathrm{th}}$ column of $\bm{\Psi}[k]$ are zero, and the value of $x_\ell$ does not affect the objective function in \eqref{eq:para_est2}.
	Under such condition, $x_\ell$ cannot be identified.
	For all line $\ell \in \calL_0$, we can remove the $\ell^{\mathrm{th}}$ column of $\bm{\Psi}[k]$, the $\ell^{\mathrm{th}}$ entry of $\bm{x}$ and $\bm{\psi}[k]$, and obtain an estimation problem of reduced size.
	
	Next we show that the line parameter can be identified as long as condition \eqref{eq:cond} is satisfied.
	Without loss of generality, we assume $\calL_1 = \calL$ since otherwise we can remove the unidentifiable variables to obtain a reduced problem that satisfied this condition.
	Then, \eqref{eq:cond2} is satisfied for all $\ell \in \calL$ and for some $k' \in \calK$.
	Assume $\rank{\bm{\Psi}[k]} < L$, then there exist $a_1, \cdots, a_L \in \bbR$, which are not all zero, such that
	\begin{align} \label{eq:lin_depen}
		\bm{\Psi}[k] [a_1, \cdots, a_L]^\top = \zeros_{mL},
	\end{align}
	where $\zeros_L$ is an $mL$-dimensional all-zeros vector.
	Without loss of generality, assume $a_1, \cdots, a_{L'}$ are not zero, while $a_{L'+1}, \cdots, a_L$ are all zero, where $1 < L' \leq L$.
	Then, it follows from \eqref{eq:psi_mat2} and \eqref{eq:lin_depen} that 
	\begin{align}
		\sum_{l=1}^{L'} a_l \bm{\pi}_l^\top \bm{\rho}_l[k'] \bm{\pi}_l = \zeros_L.
	\end{align}
	Since $\bm{\pi}_1, \cdots, \bm{\pi}_{L'}$ are linear independent, then $a_l \bm{\pi}_l^\top \bm{\rho}_l[k'] = 0$ for $\ell = 1, \cdots, L'$. 
	However, since for any $\ell \in \calL$ there exists some $k' \in \calK$ such that $\bm{\pi}_\ell^\top \bm{\rho}_\ell[k'] = 0$, then $a_\ell = 0$ for $\ell = 1, \cdots, L'$, which leads to a contradiction.
	Therefore, $\rank{\bm{\Psi}[k]} = L$ and the line parameters can be identified.
\end{proof}
\begin{remark}
	The voltage sensitivity matrices can be readily computed if all line parameters can be identified. 
	If some line parameter cannot be identified, the resulting voltage sensitivity matrices may not be accurate.
	This, however, will not have any impact on estimating voltage magnitudes since in such cases the line parameter does not affect the voltage magnitudes anyway.
	Specifically, if follows from \eqref{eq:LinDistFlow}, \eqref{eq:Xq}, and \eqref{eq:Rp} that 
	\begin{align}
		\tdbdv[k'] = \bm{R} \bm{p}[k'] + \bm{X} \bm{q}[k'] &= 2 \sum \limits_{\ell=1}^L \bm{\pi}_\ell \bm{\pi}_\ell^\top \bm{\rho}[k'] x_\ell,
	\end{align}
	in which $\bm{\pi}_\ell^\top \bm{\rho}[k'] = 0$ if $x_\ell$ cannot be identified.
	Therefore, for the purpose of achieving voltage control, the proposed voltage sensitivity estimation algorithm is effective.
\end{remark}

If we think of $z_{\ell} p_i[k'] + q_i[k']$ as some ``combined power" (in the sense that it is a combination of active and reactive power), then \eqref{eq:cond} essentially indicates that the sum of combined power injection at all downstream buses of line $\ell$ is nonzero, or equivalently, there exists some combined power flow on line $\ell$.
For any line whose receiving end is a leaf, its parameter can be identified as long as the combined power injection at the receiving end is nonzero.
Condition \eqref{eq:cond} can be easily satisfied in actual power distribution systems.

\section{Numerical Simulations} \label{sec:sim}

\begin{figure}[!t]
\centering
\includegraphics[width=3.5in]{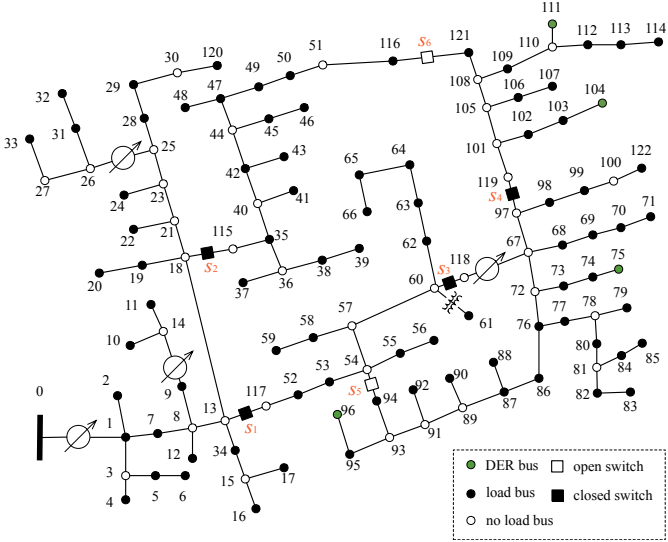}
\caption{IEEE 123-bus distribution test feeder.}
\label{fig:123bus}
\end{figure}

In this section, we validate the effectiveness of the proposed framework using a modified single-phase IEEE 123-bus distribution test feeder from \cite{test_feeder}, the topology of which is shown in Fig. \ref{fig:123bus}.
There are six switches in this feeder, four of which are normally closed and the other two open so as to maintain a radial structure of the system.
Under Assumption \textbf{A2}, this feeder has nine possible topology configurations as listed in Table \ref{table:topology_config}, among which configuration $0$ is the nominal one.

\begin{table}[!t]
	\caption{Switch Status Under Feasible Topology Configurations.}
	\label{table:topology_config}
	\centering
	\begin{tabular}{ccccccccc}
		\toprule
		config. & $s_1$ & $s_2$ & $s_3$ & $s_4$ & $s_5$ & $s_6$\\
		\midrule
		0 & on & on & on & on & off & off\\
		1 & on & on & off & on & on & off\\
		2 & on & on & on & off & off & on\\
		3 & on & on & off & on & off & on\\
		4 & on & off & on & on & off & on\\
		5 & off & on & on & on & off & on\\
		6 & on & on & off & off & on & on\\
		7 & on & off & off & on & on & on\\
		8 & off & on & off & on & on & on\\
		\bottomrule
	\end{tabular}
\end{table}


The loads are simulated in the following way. 
First, historical hourly loads of a residential building in San Diego \cite{load_data} are interpolated to increase the time granularity to $1$ second.
A zero-mean Gaussian noise with a standard deviation of $0.01$~p.u. is also added to the interpolated loads, which are then scaled to match the active and reactive power load levels in the feeder.
Four DERs are added at buses 76, 97, 105, 112, respectively, with reactive power outputs within $[-200, 200]$~kVAr.
We set $w_i^p = 1 + 0.1i$ and $w_i^q = 1 + 0.1i$, for $i \in \calN^g$.
For simplicity, we assume the DERs do not output any active power, i.e., $\bm{\underline{p}}^g = \bm{\overline{p}}^g = \zeros_n$.
The minimum and maximum voltage magnitudes are $0.95$ p.u. and $1.05$ p.u., respectively.
In addition, $\beta_1 = \beta_2 = 1\times 10^5$.
Unless otherwise specified, the discounter factor $\gamma$ is set to $1$, and the underlying topology configuration is configuration $0$, i.e., the nominal one.
While we assume the power distribution system is lossless for the analysis, in the simulation, we use a full nonlinear power flow model that is solved using Matpower \cite{zimmerman2011matpower}.

\subsection{Estimation Accuracy}

Throughout this part, the DERs do not inject any reactive power into the power distribution system.

\subsubsection{Noise-free case}

\begin{figure}[!t]
\centering
\includegraphics[width=3.5in]{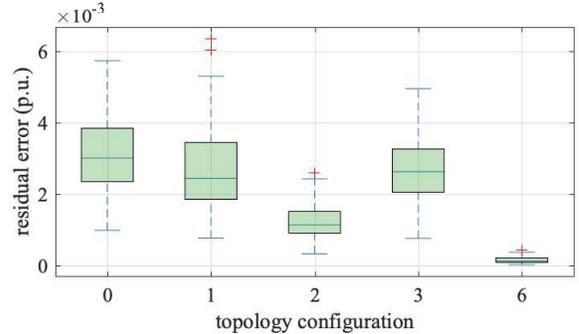}
\caption{Residual errors under different topology configurations with $10$ sets of noise-free measurements.}
\label{fig:noise_free_residual}
\end{figure}

We first evaluate the accuracy of proposed estimation algorithm in the case where the measurements are noise-free.
The algorithm is evaluated in $100$ Monte Carol runs under various loading conditions.
In each simulation run, $10$ sets of measurements are used to compute the residual error.
Residual errors are computed for each feasible topology configuration in $\calM$, while the underlying true topology configuration is one of them.
A box-plot of residual errors associated with each topology configuration when the underlying topology configuration is configuration $6$, is shown in Fig. \ref{fig:noise_free_residual}.
Note that residual errors associated with topology configurations $4, 5, 7, 8$ are at least one order of magnitude larger than those of the other configurations, and are hence not plotted.
This is the case where the residual error differences between each topology configuration is the smallest.
Yet, it is still obvious that the true topology configuration results in the minimum residual error, which is one order of magnitude smaller than those of other configurations.


\begin{figure}[!t]
\centering
\includegraphics[width=3.in]{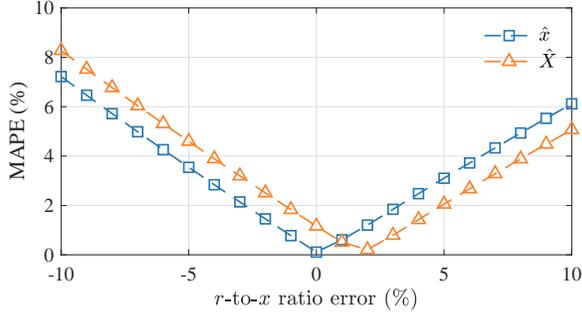}
\caption{Impacts of errors in $r$-to-$x$ ratios on parameter estimation accuracy in the noise-free case.}
\label{fig:noise_free_zeta}
\end{figure}

The parameter estimation accuracy is evaluated using the mean absolute percentage error (MAPE) of the estimates.
When $1$ set of measurements is utilized, a typical MAPE of $\hat{\bm{x}}$ is $0.11\%$, and that of $\hat{\bm{X}}$ is $1.16\%$, both of which are really small.
We note that the loading conditions of the power distribution system does not affect the accuracy of the proposed algorithm.
The $r$-to-$x$ ratios of all lines are assumed to be known.
Figure \ref{fig:noise_free_zeta} shows that the MAPE is almost linear with respect to the $r$-to-$x$ ratio errors.
Therefore, relatively small error in the $r$-to-$x$ ratios will not result in a significant increase in parameter estimator errors.

\subsubsection{Noisy case}

\begin{figure}[!t]
\centering
\includegraphics[width=3.5in]{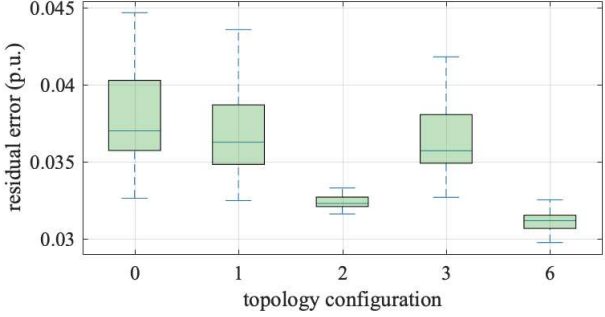}
\caption{Residual errors under different topology configurations with $60$ sets of noisy measurements..}
\label{fig:noisy_residual}
\end{figure}

To see the impacts of measurement noise, we add white Gaussian noise to measurements such that the signal-to-noise ratio (SNR) is $92$~dB, which is adopted by authors in \cite{xie2014dimensionality}.
More measurements are required to obtain a good estimation accuracy in the presence of measurement noise.
The algorithm is again evaluated in $100$ Monte Carol runs under the same setup as the noise-free case, except that $60$ sets of measurements---corresponding to measurements collected in $1$ minute---are used to compute the residual error.
A box-plot of residual errors associated with each topology configuration is shown in Fig. \ref{fig:noisy_residual}.
Note that residual errors associated with topology configurations $4, 5, 7, 8$ are one order magnitude larger than those of the other configurations, and are hence not plotted.
The true topology configuration, i.e., configuration $6$, still results in the minimum residual error.
We note that increasing more measurements generally lead to higher accuracy in identifying the topology configuration.

\begin{figure}[!t]
\centering
\includegraphics[width=3.in]{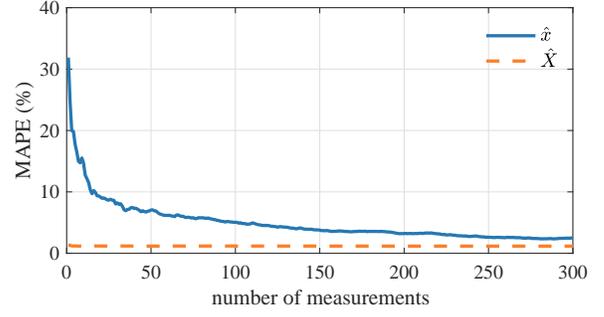}
\caption{Impacts of measurement numbers on parameter estimation accuracy in the noisy case.}
\label{fig:noisy_x}
\end{figure}

The number of measurements have a direct impacts on the estimation accuracy.
As is shown in Fig. \ref{fig:noisy_x}, the MAPE of $\hat{\bm{x}}$ drops quickly when increasing the number of measurements, approximately from $31.9\%$ with $1$ set of measurements to $2.51\%$ with $300$ sets of measurements.
The MAPE of $\hat{\bm{X}}$---which is what really matters---is relatively insensitive to the number of measurements, with an MAPE around $1.17\%$.
Indeed, this result illustrates the high effectiveness and efficiency of the proposed estimation algorithm.

\begin{figure}[!t]
\centering
\includegraphics[width=3.in]{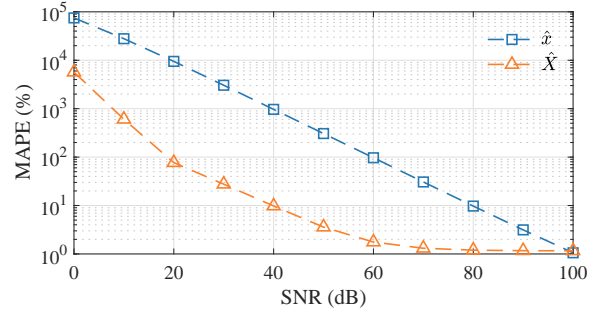}
\caption{Impacts of SNR on parameter estimation accuracy when $300$ sets of measurements are used.}
\label{fig:noisy_x_SNR}
\end{figure}

Figure \ref{fig:noisy_x_SNR} shows the impacts of SNR on parameter estimation accuracy when $300$ sets of measurements are used.
When the SNR is beyond $50$~dB, the MAPE of the voltage sensitivity matrix is within $3.6\%$, which is relatively small.
In the rest of of the simulation, we assume a SNR of $92$~dB for all measurements.

\subsubsection{Accuracy under topology reconfiguration}

\begin{figure}[!t]
\centering
\includegraphics[width=3.in]{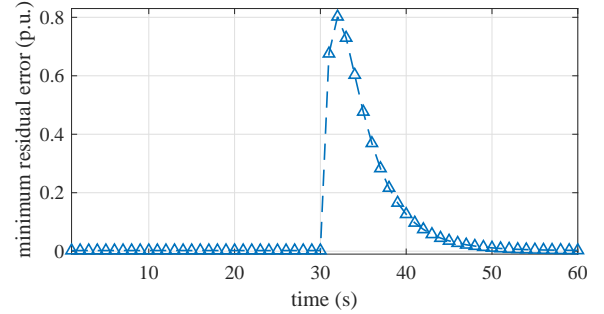}
\caption{Minimum residual error under topology reconfiguration.}
\label{fig:noisy_reconfig_residual}
\end{figure}

\begin{figure}[!t]
\centering
\includegraphics[width=3.in]{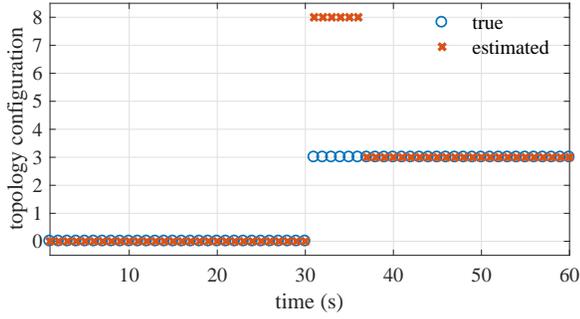}
\caption{Estimated topology configuration under topology reconfiguration.}
\label{fig:noisy_reconfig_config}
\end{figure}

\begin{figure}[!t]
\centering
\includegraphics[width=3.in]{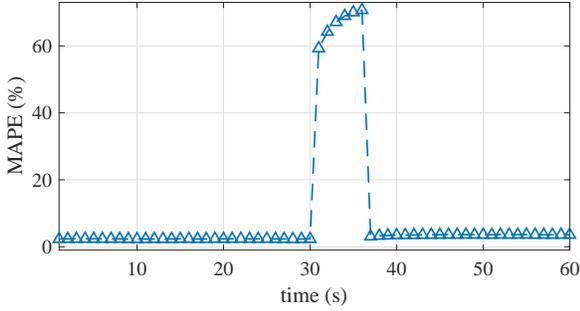}
\caption{MAPE of $\hat{\bm{X}}$ under topology reconfiguration.}
\label{fig:noisy_reconfig_mape_X_est}
\end{figure}

The proposed algorithm works well not only under a fixed topology configuration but also when topology reconfiguration occurs.
To illustrate this, we simulate a case where the underlying topology configuration is changed from configuration $0$ to configuration $3$ at $31$~s.
$60$~sets of measurements are used to compute the voltage sensitivities, i.e., $|\calK| = 60$.
The discount factor $\gamma$ is set to $0.6$.
The minimum residual error and the corresponding estimated topology configuration are shown in Figs. \ref{fig:noisy_reconfig_residual} and \ref{fig:noisy_reconfig_config}, respectively.
A jump in the minimum residual error is observed when the topology is reconfigured.
The new topology is successfully identified after $6$~s.
Correspondingly, the MAPE of $\hat{\bm{X}}$ is also reduced to less than $2\%$ after $6$~s, as is shown in Fig. \ref{fig:noisy_reconfig_mape_X_est}.

\subsection{Voltage Control Performance}

\begin{figure}[!t]
\centering
\includegraphics[width=3.in]{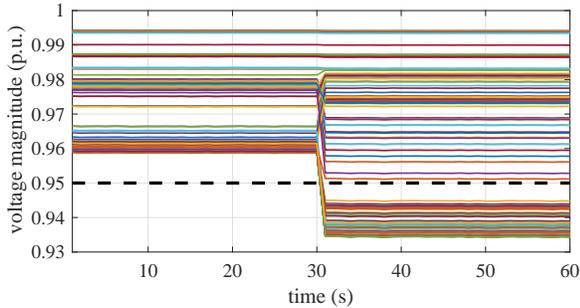}
\caption{Voltage profiles with model-based voltage regulation scheme under topology reconfiguration.}
\label{fig:noisy_reconfig_voltage_model}
\end{figure}

\begin{figure}[!t]
\centering
\includegraphics[width=3.in]{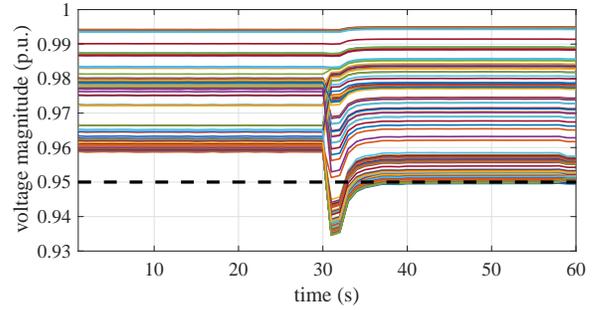}
\caption{Voltage profiles with proposed voltage regulation scheme under topology reconfiguration.}
\label{fig:noisy_reconfig_voltage_data}
\end{figure}

\begin{figure}[!t]
\centering
\includegraphics[width=3.in]{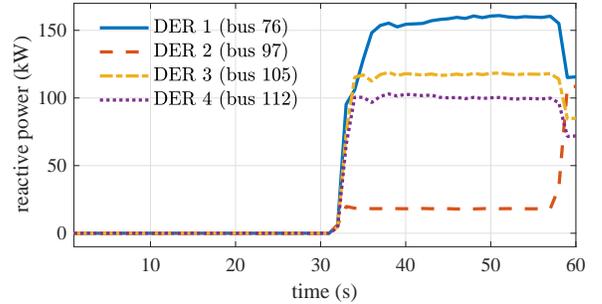}
\caption{DER reactive power injections with proposed voltage regulation under topology reconfiguration.}
\label{fig:noisy_reconfig_reactive_data}
\end{figure}

Next, we show the performance of the proposed voltage regulation framework in the same case as the one in the previous section with topology reconfiguration, where the underlying topology configuration is changed from configuration $0$ to configuration $3$ at $31$~s.
A mode-based voltage regulation scheme, which assumes the true voltage sensitivity matrices are known but is not aware of the topology reconfiguration, is used for the purpose of comparison.
The voltage profiles with the model-based and the proposed voltage regulation schemes are presented in Figs. \ref{fig:noisy_reconfig_voltage_model} and \ref{fig:noisy_reconfig_voltage_data}, respectively, and the DER reactive power injections are shown in Fig. \ref{fig:noisy_reconfig_reactive_data}.
It is obvious that the proposed data-driven voltage regulation framework is very effective and efficient in restoring the voltage magnitudes to the desirable range.
This illustrates the strong adaptivity of our voltage regulation framework to system condition changes such as topology reconfiguration.

\section{Concluding Remarks} \label{sec:con}

In this paper, we proposed a data-driven voltage regulation framework for DERs in a balanced radial power distribution system.
This framework utilizes a linear model that approximates the nonlinear relation between the voltage magnitudes and power injections, and estimates its parameters---the so-called voltage sensitivities---indirectly by estimating the topology configuration and the corresponding line parameters.
In particular, the proposed estimation algorithm for the voltage sensitivities requires much fewer data than existing ones by exploiting the structural characteristics of the power distribution system.
Using the estimated voltage sensitivities, the optimal DER power injections can be readily determined by solving a convex optimization problem.

Theoretical analysis shows that the voltage sensitivities of interest are easily identifiable.
The inherent data-driven nature of the framework makes it adaptive to changes in system conditions, such as topology reconfigurations.
Numerical simulations illustrated that the voltage sensitivities can be estimated accurately using a few set of measurements even under topology reconfiguration; consequently, guaranteeing good voltage regulation performance.

\bibliographystyle{IEEEtran}
\bibliography{ref}

\end{document}